\numberwithin{equation}{section}
\newtheorem{theorem}{Theorem}[section]
\newtheorem{corollary}[theorem]{Corollary}
\theoremstyle{definition}
\theoremstyle{proposition}
\newcommand{\mc}[1]{\mathcal{#1}}
\newcommand{\setm}{\setminus}
\newcommand{\subs}{\subset}
\def\<{\left\langle}
\def\>{\right\rangle}
\def\br#1;#2;{\bigl[ {#1} \bigr]^ {#2} }
\newcommand{\dis}{\mc D}
\author[A. Dow]{Alan Dow}
\address{UNC Charlotte}
\email{adow@uncc.edu}
\author[I. Juh\'asz]{Istv\'an Juh\'asz}
\address      { Alfréd Rényi Institute of Mathematics, Hungarian Academy of Sciences }
\email{juhasz@renyi.hu}
\author[L. Soukup]{Lajos Soukup}
\thanks
  {
   }
\address
      { Alfr{\'e}d R{\'e}nyi Institute of Mathematics, Hungarian Academy of Sciences }
\email{soukup@renyi.hu}
\author[Z. Szentmikl\'ossy]{Zolt\'an Szentmikl\'ossy}
\address{E\"otv\"os University of Budapest}
\email{szentmiklossyz@gmail.com}
\author[W. Weiss]{William Weiss}
\address{University of Toronto}
\email{weiss@math.utoronto.ca}
\subjclass[2010]{54A25, 03E35, 54A35}
\keywords{tightness, Frechet, $G_\delta$-modification, compact, Lindelöf, pseudocharacter,
non-reflecting stationary set, strongly compact cardinal}
\title[$G_\delta$-tightness]{On the tightness of $G_\delta$-modifications}
\date{\today}
\begin{document}

\maketitle

\begin{abstract}
The $G_\delta$-modification $X_\delta$ of a topological space $X$ is the space on the same underlying set
generated by, i.e. having as a basis, the collection of all $G_\delta$ subsets of $X$. Bella and Spadaro recently investigated
in \cite{BS} the connection between the values of various cardinal functions taken on $X$ and
$X_\delta$, respectively. In their paper, as Question 2, they raised the following problem:
Is $t(X_\delta) \le 2^{t(X)}$ true for every (compact) $T_2$ space $X$? Note that this is actually two
questions.

In this note we answer both questions: In the compact case affirmatively and in the
non-compact case negatively. In fact, in the latter case we even show that it is consistent with ZFC that no upper bound
exists for the tightness of the $G_\delta$-modifications of countably tight, even Frechet spaces.
\end{abstract}

\section{Introduction}

The $G_\delta$-modification $X_\delta$ of a topological space $X$ is the space on the same underlying set
generated by, i.e. having as a basis, the collection of all $G_\delta$ subsets of $X$.
Bella and Spadaro recently investigated
in \cite{BS} the connection between the values of various cardinal functions taken on $X$ and
$X_\delta$, respectively. In their paper, as Question 2, they raised the following problem:
Is $t(X_\delta) \le 2^{t(X)}$ true for every (compact) $T_2$ space $X$? Note that this is actually two
questions.
In this note we answer both questions: In the compact case affirmatively and in the non-compact case negatively.
Actually, for the compact case we prove something stronger: We show that for every regular
Lindelöf space $X$ we have $t(X_\delta) \le 2^{t(X)}$. In the non-compact case we shall show that it is consistent with ZFC that no upper bound
exists for the tightness of the $G_\delta$-modifications of countably tight, even Frechet spaces.

We shall use standard notation and terminology from set theory and general topology. In particular,
concerning cardinal functions, we follow the notation and terminology of \cite{J}
It will be useful to denote by $G_\delta(X)$ the family of all $G_\delta$ subsets of a space $X$.
So, as we said, $G_\delta(X)$ is a basis for $X_\delta$.
For any $A \subs X$ we shall use $\overline{A}^\delta$ to denote the closure of $A$ in $X_\delta$.

Just like in \cite{BS}, our proofs will often use elementary submodels of appropriate "initial segments"
of the form $H(\lambda)$ of the universe. Most readers will be
familiar enough with these notions and for those who are not they are surveyed e.g. in \cite{D}.

\section{Bounds for the tightness of $G_\delta$-modifications}

\begin{theorem}\label{tm:LCT}
If $X$ is a regular Lindelöf space then $t(X_\delta) \le 2^{t(X)}$.
\end{theorem}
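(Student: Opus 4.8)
The plan is to run an elementary submodel reflection argument, reducing the tightness estimate to a single closure-reflection statement. Write $\kappa = t(X)$. Since $(2^\kappa)^\kappa = 2^\kappa$, I fix a large regular $\theta$ and build an elementary submodel $M \prec H(\theta)$ with $X, \tau, A \in M$, $|M| = 2^\kappa$, and $[M]^{\le\kappa} \subseteq M$ (obtained as a continuous increasing union of length $\kappa^+$ of models of size $2^\kappa$, closing off under $\le\kappa$-sequences at successor stages). Given $A \subseteq X$ and a point $x \in \overline{A}^\delta$, I also put $x \in M$ and set $B = A \cap M$, so $|B| \le 2^\kappa$. The theorem then reduces to the reflection claim $\overline{A}^\delta \cap M \subseteq \overline{A \cap M}^\delta$; applied to our $x$ this gives $x \in \overline{B}^\delta$ with $|B| \le 2^{t(X)}$, which is exactly what tightness of $X_\delta$ demands.

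To prove the reflection claim I argue by contradiction: suppose $x \in M \cap \overline{A}^\delta$ but $x \notin \overline{B}^\delta$. Then some $G_\delta$ set separates $x$ from $B$, and using regularity I may write it as $G = \bigcap_{n \in \omega} U_n$ with $x \in U_{n+1}$, $\overline{U_{n+1}} \subseteq U_n$, and $G \cap A \cap M = \emptyset$. The goal is to upgrade this separation of $x$ from $A \cap M$ into a genuine $G_\delta$ neighborhood of $x$ that misses all of $A$. Since $x \in \overline{A}^\delta$ forces $G \cap A \neq \emptyset$, no such separation of $x$ from $A$ can exist; so producing one inside $M$, whose existence then reflects up to $H(\theta)$, contradicts $x \in \overline{A}^\delta$ and finishes the proof.

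Constructing that neighborhood is where the Lindelöf and tightness hypotheses are spent, and it is the main obstacle. The sets $U_n$ themselves need not lie in $M$, so I cannot simply reflect $G$; instead I plan to rebuild an $M$-definable nested sequence of open sets level by level. At level $n$, the closed piece $A \cap (X \setminus U_n)$ together with the Lindelöf (hence normal) structure of $X$ lets me pass to a countable subcover which, by elementarity, can be taken inside $M$; regularity is used to renest so the intersection stays a $G_\delta$, and the $\kappa$-tightness of $X$ together with $[M]^{\le\kappa} \subseteq M$ is used to localize the membership of $A$ near $x$ and push it down into $B$ at each level. Carrying this out across the $\omega$ many levels, the branching required to handle all of $A$ rather than merely $A \cap M$ is bounded by $\kappa^\omega \le 2^\kappa$, which is precisely the source of the exponential bound in the statement. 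The delicate point will be ensuring that the level-by-level reflected open sets can be assembled into a single $G_\delta$ neighborhood of $x$ that simultaneously excludes every point of $A$.
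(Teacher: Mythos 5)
There is a genuine gap, and it sits exactly where you flagged it yourself. Your outer frame coincides with the paper's: a $\le t(X)$-closed elementary submodel $M$ of size $2^{t(X)}$ containing $X, A, x$, reduction to the reflection claim $x \in \overline{A \cap M}^{\delta}$, and the endgame in which a $G_\delta$ set $W \in M$ with $x \in W$ and $W \cap A \cap M = \emptyset$ reflects to $W \cap A = \emptyset$, contradicting $x \in \overline{A}^{\delta}$ (the paper runs this contrapositively, but it is the same move). What is missing is the heart of the argument: actually producing, for each $n$, an open set $W_n \in M$ with $x \in W_n$ and $A \cap M \cap W_n \subseteq U_n$. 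Your gestures toward it fail as stated. First, $A \cap (X \setminus U_n)$ is not closed, so Lindelöfness does not apply to it; the correct object is $\overline{A \cap M} \setminus U_n$ (closure in $X$). Second, ``a countable subcover which, by elementarity, can be taken inside $M$'' is not a legitimate step: the cover one extracts it from is not an element of $M$, and indeed the paper explicitly concedes that the countable index set of the subcover need not lie in $M$; what rescues the argument is countable closure of $M$ --- each individual member of the subcover lies in $M$, so the intersection of the corresponding dual neighborhoods of $x$ lands in $M$ (and likewise $W = \bigcap_n W_n \in M$, which your reflection step silently requires). Third, your ``branching bounded by $\kappa^{\omega} \le 2^{\kappa}$'' as ``the source of the exponential bound'' is a phantom mechanism: no branching occurs anywhere in a correct proof; the exponential enters solely through $|M| = 2^{\kappa}$ with $[M]^{\le \kappa} \subseteq M$, which you already fixed in your first paragraph.

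The missing idea is how tightness manufactures the cover inside $M$. For each $y \in \overline{A \cap M} \setminus U_n$, $\kappa$-tightness gives $B_y \subseteq A \cap M$ with $|B_y| \le \kappa$ and $y \in \overline{B_y}$; since $y \ne x$, regularity lets one choose $B_y$ inside an open set whose closure misses $x$, so $x \notin \overline{B_y}$. Now $[M]^{\le \kappa} \subseteq M$ puts $B_y \in M$, and elementarity (applied to $B_y$ and $x$, both in $M$) yields disjoint open sets $U_y \supseteq \overline{B_y}$ and $W_y \ni x$ with $U_y, W_y \in M$. Lindelöfness of the closed set $\overline{A \cap M} \setminus U_n$ gives a countable $C_n$ with $\overline{A \cap M} \setminus U_n \subseteq \bigcup_{y \in C_n} U_y$, whence $W_n = \bigcap_{y \in C_n} W_y$ satisfies $A \cap M \cap W_n \subseteq U_n$ and, by countable closure of $M$, $W_n \in M$ even though $C_n$ itself may not be. Then $W = \bigcap_n W_n \in M$ is a $G_\delta$ set containing $x$ with $A \cap M \cap W \subseteq G$, and your elementarity endgame closes the proof. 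This construction is the entire content of the paper's proof beyond the submodel setup; since you leave it as ``the delicate point,'' and your stated route to it (non-closed pieces, subcovers by elementarity, a branching count) would not survive being carried out, what you have is a correct plan rather than a proof.
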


\begin{proof}
Assume that $X$ is a regular Lindelöf space, $p \in X$ and $A \subs X$ are such that $p \in \overline{A}^\delta$.
Let us put $\kappa = 2^{t(X)}$ and choose an elementary submodel $M$ of an appropriate $H(\lambda)$ such that
$|M| = \kappa, \, M^{t(X)} \subs M$, moreover $\{X, A, p\} \subs M$. We shall show that then $p \in \overline{A \cap M}^\delta$,
which by $|A \cap M| \le \kappa$ will complete our proof.

To see this, assume that $p \in H \in G_\delta(X)$, i.e. $H = \bigcap_{n < \omega} V_n$ where each $V_n$ is open in $X$.
We have to prove that $H \cap A \cap M \ne \emptyset$.

For every point $x \in \overline{A \cap M}$ (note that this is the closure in $X$) there is a subset
$B_x \subs A \cap M$ with $|B_x| \le t(X)$ such that $x \in \overline{B_x}$. Then $M^{t(X)} \subs M$ implies $B_x \in M$.
Clearly, if $x \ne p$ then $B_x$ can be chosen so that $p \notin \overline{B_x}$. In this case, by the regularity of $X$,
there are open sets $U_x \supset \overline{B_x}$ and $W_x$ with $p \in W_x$ such that $U_x \cap W_x = \emptyset$, moreover
as both $\overline{B_x}$ and $p$ belong to $M$, we may assume that $U_x$ and $W_x$ also belong to $M$.

Now, the closed subspace $\overline{A \cap M} \setm V_n$ of $X$ is Lindelöf for each $n < \omega$,
hence there is a countable subset $C_n \subs \overline{A \cap M} \setm V_n$ such that
$\overline{A \cap M} \setm V_n \subs \bigcup_{x \in C_n} U_x$. This clearly implies that if we put $W_n = \bigcap_{x \in C_n} W_x$ then
$$A \cap M \cap W_n \subs V_n \,.$$
Although we do not know if $C_n \in M$, we do know that $W_n \in M$ because $W_x \in M$ for each $x \in C_n$
and $M$ is countably closed.

It follows then that $W = \bigcap_{n < \omega} W_n \in M \cap G_\delta(X)$, hence $p \in \overline{A}^\delta$ and $p \in W$ imply
$W \cap A \ne \emptyset$ and, by elementarity, $W \cap A \cap M \ne \emptyset$ as well. But then we have
$\emptyset \ne W \cap A \cap M \subs H \cap A \cap M$, which completes our proof.

\end{proof}

The one-point compactification of an uncountable discrete space has countable tightness, it is even Frechet,
and its $G_\delta$-modification clearly has tightness $\omega_1$. This, of course, shows that Theorem \ref{tm:LCT}
is sharp for countably tight compact spaces under CH. But what happens if the continuum $\mathfrak{c}$ is large?
Actually, we do not know the full answer to this question.

However, we happen to have a ready made consistent answer in \cite{CH*} where a weakening of CH called CH* was
introduced. It was shown there that CH* holds in any model obtained by adding any number of Cohen reals to a
ground model that satisfies CH. Thus CH* is consistent with $\mathfrak{c}$ being anything it can be.

Let us denote by $\ell_{\omega_1}(A)$ the set of all points obtainable as the limit of a converging $\omega_1$-sequence
of points of $A$ in a space $X$. We permit constant sequences, hence $A \subs \ell_{\omega_1}(A)$.
It is obvious that we always have $\ell_{\omega_1}(A) \subs \overline{A}^\delta$. Now, for countably tight compacta,
the proof of Theorem 3.2 of \cite{CH*} actually establishes the following converse of this.

\begin{theorem}\label{tm:CCT}
CH* implies that if $X$ is any countably tight compactum and $A \subs X$ then $\ell_{\omega_1}(A) \supset \overline{A}^\delta$,
hence $\ell_{\omega_1}(A) = \overline{A}^\delta$. Consequently, if $X_\delta$ is non-discrete then  $t(X_\delta) = \omega_1$.
\end{theorem}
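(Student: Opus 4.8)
The plan is to establish the two inclusions $\ell_{\omega_1}(A) \subseteq \overline{A}^\delta$ and $\overline{A}^\delta \subseteq \ell_{\omega_1}(A)$ separately, the first being routine and the second carrying all the weight, and then to read off the tightness statement. For the first inclusion, which needs no hypothesis, suppose $\langle a_\alpha : \alpha < \omega_1 \rangle$ is a sequence in $A$ converging to $p$ in $X$ and let $H = \bigcap_{n<\omega} V_n$ be any $G_\delta$ neighborhood of $p$. For each $n$ the sequence is eventually inside $V_n$, say from $\beta_n$ on; since $\mathrm{cf}(\omega_1) = \omega_1 > \omega$, the ordinal $\beta = \sup_n \beta_n$ is below $\omega_1$, so $a_\beta \in H \cap A \neq \emptyset$. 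As $H$ was arbitrary this gives $p \in \overline{A}^\delta$, so $\ell_{\omega_1}(A) \subseteq \overline{A}^\delta$ and everything reduces to the reverse inclusion.

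For $\overline{A}^\delta \subseteq \ell_{\omega_1}(A)$ I would fix $p \in \overline{A}^\delta$ and manufacture an $\omega_1$-sequence from $A$ that converges to $p$ in $X$; this is exactly the specialization of the proof of Theorem 3.2 of \cite{CH*} to the case of a countably tight compactum. The skeleton is to build a continuous increasing chain $\langle M_\alpha : \alpha < \omega_1 \rangle$ of elementary submodels of a suitable $H(\lambda)$ with $\{X, A, p\} \subseteq M_0$, kept small via the combinatorial strength of CH*, and to extract points $a_\alpha \in A \cap M_{\alpha+1}$ stage by stage. Here countable tightness enters to reduce the relevant neighborhood traces of $p$ to countable families, whose intersections are genuine $G_\delta$ sets; since $p \in \overline{A}^\delta$, each such $G_\delta$ meets $A$, and elementarity places a witness $a_\alpha$ inside the next model. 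The essential point that the cited proof supplies, and the only feature I need to verify carries over, is that this construction never uses more than countable tightness of $X$.

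The hard part will be verifying that the resulting $\langle a_\alpha : \alpha < \omega_1 \rangle$ is eventually inside \emph{every} open $U \ni p$, not merely inside the neighborhoods coded along the tower; note that $p$ may have large character in a countably tight compactum, so a naive "the model trace shrinks to $\{p\}$" argument is not available, and it is precisely this uniform convergence that forces the use of CH* rather than a bare submodel chain. This is the crux of the argument of \cite{CH*}, whose combinatorial content (surviving the addition of Cohen reals) is designed to guarantee that a single $\omega_1$-indexed selection can be driven into all neighborhoods of $p$ simultaneously; I would follow that argument, checking at each reflection step that the compactness of $X$ lets me pass from arbitrary open $U \ni p$ to a trace already handled by the construction.

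Finally I would deduce the corollary. The two inclusions give $\ell_{\omega_1}(A) = \overline{A}^\delta$ for every $A \subseteq X$. For the upper bound on tightness, if $p \in \overline{A}^\delta = \ell_{\omega_1}(A)$ then $p$ is the limit of an $\omega_1$-sequence from $A$ whose underlying point set $B \subseteq A$ satisfies $|B| \le \omega_1$, and by the trivial inclusion already proved $p \in \ell_{\omega_1}(B) \subseteq \overline{B}^\delta$; hence $t(X_\delta) \le \omega_1$. For the reverse inequality, observe that in any $T_1$ space countable sets are closed in $X_\delta$: given countable $B$ and $q \notin B$, choose for each $b \in B$ an open $U_b \ni q$ with $b \notin U_b$, so that $\bigcap_{b \in B} U_b$ is a $G_\delta$ neighborhood of $q$ disjoint from $B$. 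Consequently, if $X_\delta$ is non-discrete it has a non-isolated point whose $X_\delta$-closure cannot be caught by any countable subset, so $t(X_\delta) > \omega$; combined with $t(X_\delta) \le \omega_1$ this yields $t(X_\delta) = \omega_1$.
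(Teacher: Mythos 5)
Your proposal takes essentially the same route as the paper: the paper's own proof of this theorem consists precisely of the remark that the proof of Theorem 3.2 of \cite{CH*} already establishes the nontrivial inclusion $\overline{A}^\delta \subseteq \ell_{\omega_1}(A)$ for countably tight compacta, which is exactly the deferral you make for the hard direction. Your explicit verifications of the easy inclusion $\ell_{\omega_1}(A) \subseteq \overline{A}^\delta$ and of the consequence $t(X_\delta) = \omega_1$ (via the observation that countable sets are closed in $X_\delta$ for $T_1$ spaces) are correct and merely spell out steps the paper leaves implicit.
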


Although the statement of Theorem 3.2 in \cite{CH*} is slightly weaker than this, the reader may easily check
that actually this is proved there.

This result then leads us to the following natural and intriguing  question.

\medskip

PROBLEM 1.
Is it consistent to have a countably tight compactum $X$ for which $t(X_\delta) > \omega_1$?

\medskip

It turns out by our next two ZFC results that the, somewhat surprising, equality $\ell_{\omega_1}(A) = \overline{A}^\delta$
may occur in other situations as well. It will be useful to introduce the following notation:
If $X$ is a space and ${\kappa}$ is a cardinal then we write
\begin{displaymath}
 \dis_{\kappa}(X)=\{D\in \br X;{\kappa}; : \text{$D$ is discrete}\}.
\end{displaymath}
A countably tight compact, or just Lindelöf space $X$ contains no uncountable free sequences,
i.e. satisfies $F(X) = \omega$. This makes the assumption $F(X) = \omega$ in our following result fitting with
the topic of this paper.

\begin{theorem}\label{tm:psi=w}
Let $X$ be a regular space such that $F(X) = \omega$ and for every $D \in \dis_\omega(X)$
we have $\psi(\overline{D}) \le \omega$. Then for every $A \subs X$ we have $\,\ell_{\omega_1}(A) = \overline{A}^\delta$.
\end{theorem}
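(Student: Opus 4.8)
Since the inclusion $\ell_{\omega_1}(A)\subseteq\overline{A}^\delta$ is already noted to hold in general, I would only prove the reverse inclusion. Fix $p\in\overline{A}^\delta$; we may assume $p\notin A$, as otherwise the constant $\omega_1$-sequence witnesses $p\in\ell_{\omega_1}(A)$. Following the elementary-submodel style of Theorem~\ref{tm:LCT}, I would fix an increasing continuous chain $\langle M_\alpha:\alpha<\omega_1\rangle$ of countable elementary submodels of a suitable $H(\lambda)$ with $\{X,A,p\}\subseteq M_0$ and $M_\alpha\in M_{\alpha+1}$, and set $M=\bigcup_{\alpha<\omega_1}M_\alpha$. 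For each $\alpha$ the family $\mathcal N_\alpha=\{V\in M_\alpha:V\text{ open},\,p\in V\}$ is countable, so $H_\alpha=\bigcap\mathcal N_\alpha$ is a $G_\delta$ set containing $p$; since $p\in\overline{A}^\delta$ and $p\notin A$, I may choose $a_\alpha\in A\cap H_\alpha$. Put $D=\{a_\alpha:\alpha<\omega_1\}$.

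The candidate for the converging sequence is $\langle a_\alpha:\alpha<\omega_1\rangle$. The easy half of convergence is automatic: if $U$ is an open neighborhood of $p$ with $U\in M$, then $U\in M_{\alpha_0}$ for some $\alpha_0$, whence $U\in\mathcal N_\alpha$ and so $a_\alpha\in H_\alpha\subseteq U$ for all $\alpha\geq\alpha_0$. Thus the sequence is eventually inside every neighborhood of $p$ that belongs to $M$, and in particular $p\in\overline{D}$ and $p$ is, modulo $M$, the only accumulation point of the sequence. The crux is to upgrade this to genuine net-convergence against \emph{all} open neighborhoods of $p$.

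To do this I would argue by contradiction, and this is where the two hypotheses $F(X)=\omega$ and the $\psi$-condition enter. Suppose there is an open $U\ni p$ with $Z=\{\alpha:a_\alpha\notin U\}$ cofinal, hence of size $\omega_1$; by the previous paragraph such a $U$ cannot lie in $M$. Using regularity, fix an open $U'$ with $p\in U'\subseteq\overline{U'}\subseteq U$, so the points $\{a_\alpha:\alpha\in Z\}$ avoid $\overline{U'}$. I would then thin $Z$ to an uncountable $Z'$ along which the $a_\alpha$ form a free sequence: at each step regularity separates the closed head-set from the newly chosen point, while the membership $a_\alpha\in H_\alpha$ keeps $p$ attached to every tail, so that heads (living off $\overline{U'}$, away from $p$) stay disjoint from tail-closures (clustering at $p$). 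An uncountable free sequence contradicts $F(X)=\omega$. The role of the hypothesis $\psi(\overline{D})\leq\omega$ for $D\in\dis_\omega(X)$ is to make this separation actually succeed and to reflect correctly into $M$: it guarantees that the countable discrete traces arising in the construction have each of their closure-points, and in particular $p$, realized as a $G_\delta$ of the relevant closure, so that by elementarity a countable witnessing family of open sets can be found inside $M$; these are then among the $V\in\mathcal N_\alpha$ controlling the points $a_\alpha$, which is exactly what forces the tails toward $p$ and lets the free-sequence construction run for $\omega_1$ steps. Once the contradiction is reached, $\langle a_\alpha\rangle$ converges to $p$ and $p\in\ell_{\omega_1}(A)$, completing the reverse inclusion.

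The main obstacle I anticipate is precisely this last upgrade: having $p$ as the unique accumulation point ``modulo $M$'' does not by itself yield net-convergence, and the delicate point is to orchestrate the regularity-based separation so that the escaping points genuinely assemble into an \emph{uncountable} free sequence rather than merely a discrete set with several accumulation points. Managing this simultaneously with keeping every tail clustered at $p$, so that the heads and tails have disjoint closures, is where the interplay of $F(X)=\omega$ and $\psi(\overline{D})\leq\omega$ must be exploited carefully.
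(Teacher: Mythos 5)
Your plan founders at exactly the point you yourself flag as the crux, because the two hypotheses cannot be brought to bear in the form you set things up. The condition $\psi(\overline{D})\le\omega$ applies only to countable \emph{discrete} sets $D\in\dis_\omega(X)$, but nothing in your construction makes the head-sets $\{a_\beta:\beta<\alpha\}$ discrete: you pick $a_\beta$ arbitrarily in $A\cap H_\beta$, and $H_\beta$ is the intersection of the neighborhoods of $p$ lying in $M_\beta$, which has no reason to avoid the closure of the earlier points, since that closure --- indeed the set of earlier points itself --- is not an element of $M_\beta$ (the $a_\beta$ were chosen outside the models). For the same reason your ``reflect correctly into $M$'' step fails: elementarity produces a countable witnessing family of open sets inside $M_\alpha$ only when the discrete trace, or its closure, is an \emph{element} of $M_\alpha$, which you never arrange; consequently nothing forces $H_\alpha$ to be disjoint from the head-closures. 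The a posteriori thinning of $Z$ to a free sequence is then asserted rather than performed: regularity separates a point from a closed set it does not belong to, but you have no reason the newly chosen point avoids the head-closure, and freeness moreover requires tail-closures disjoint from head-closures, which must be built into the choice of the points, not recovered afterwards. (You also use tacitly that each $H_\alpha$ is closed; that does hold, by regularity and elementarity as in the proof of Theorem \ref{tm:psi=w1}, but it should be said.)

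The paper's proof repairs precisely this by making the separation part of the induction, and it needs no submodels at all. One maintains as an inductive invariant that $Y_\alpha=\{x_\beta:\beta<\alpha\}$ is a free sequence in $X\setm\{p\}$, in particular $Y_\alpha\in\dis_\omega(X)$; then either $p\notin\overline{Y_\alpha}$ or $\psi(p,\overline{Y_\alpha})\le\omega$, and in both cases regularity yields a \emph{closed} $G_\delta$ set $H\ni p$ with $H\cap(\{p\}\cup\overline{Y_\alpha})=\{p\}$. Setting $H_\alpha=H\cap\bigcap_{\beta<\alpha}H_\beta$ and picking $x_\alpha\in A\cap H_\alpha$ (possible since $p\in\overline{A}^\delta$), the invariant self-propagates: each tail lies in the closed set $H_\alpha$ while $\overline{Y_\alpha}\cap H_\alpha\subs\{p\}$, so the full $\omega_1$-sequence is free in $X\setm\{p\}$. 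Convergence then falls out directly, with no contradiction argument or thinning: for any open $U\ni p$ the set $\{x_\alpha:x_\alpha\notin U\}$ avoids the open set $U$, hence its closure misses $p$, so it is a free sequence in $X$ itself and is countable by $F(X)=\omega$; thus every neighborhood $U$ of $p$ contains a tail. If you insist on keeping the submodel scaffolding, you would have to choose $a_\beta\in M_{\beta+1}$, require the chain to contain its own initial segments so the head-sets become elements of later models, and still carry the freeness invariant --- at which point the models contribute nothing beyond the direct construction.
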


\begin{proof}
Assume that $p \in X$ and $A \subs X$ are such that $p \in \overline{A}^\delta \setm A$.
By induction on ${\alpha}<{\omega}_1$ we shall define closed $G_{\delta}$ sets $H_{\alpha}$ containing $p$
and points $x_{\alpha}\in A\cap H_{\alpha}$ as follows.

Assume that $\{H_{\beta}:{\beta}<{\alpha}\}$ and $Y_{\alpha}=\{x_{\beta}:{\beta}<{\alpha}\}$
have been defined, moreover $Y_{\alpha}$ is a free sequence in $X\setm \{p\}$, hence $Y_{\alpha} \in \dis_{\omega}(X)$.
Then either $p \notin \overline {Y_{\alpha}}$ or $\psi(p,\overline {Y_{\alpha}})\le {\omega}$.
But in both cases there is a closed $G_{\delta}$ set
$H$ containing $p$ such that $H\cap (\{p\}\cup \overline {Y_{\alpha}}) = \{p\}$.
We then let $H_{\alpha}=H\cap\bigcap_{{\beta}<{\alpha}}H_{\beta} \in G_\delta(X)$  and use
$p \in \overline{A}^\delta$ to pick the point $x_\alpha \in A \cap H_\alpha$.

Thus we have constructed $\{x_{\beta}:{\beta}<{\omega}_1\} \subs A$.
The sequence  $\{x_{\beta}:{\beta}<{\omega}_1\}$ is free in $X\setm \{p\}$ because
for every ${\alpha}<{\omega}_1$ we have $\overline{Y_{\alpha}}\cap H_{\alpha} \subs \{p\}$ and
$\overline{\{x_{\beta}:{\beta}\ge {\alpha}\}}\subs H_{\alpha}$.

If  $U$ is any open set containing $p$ then $\{x_{\alpha}:x_{\alpha}\notin U\}$ is free in $X$, hence it is countable.
In other words, $U$ contains a tail of $\{x_{\alpha}:{\alpha}<{\omega}_1\}$, i.e. the $\omega_1$-sequence
$\{x_{\alpha}:{\alpha}<{\omega}_1\} \subs A$ indeed converges to $x$.
\end{proof}

Clearly, the condition $\psi(\overline{D}) \le w(\overline{D}) = \omega$ is satisfied for
any countable subset $D$ of the $\Sigma$-product $\Sigma(\kappa)$ taken inside the Tychonov cube of weight $\kappa$.
Also, compact subspaces of such $\Sigma$-products, i.e. Corson-compacta are Frechet, hence do not contain
uncountable free sequences. Thus we immediately obtain the following corollary of Theorem \ref{tm:psi=w}:

\begin{corollary}\label{Corson}
For every subset $A$ of a Corson-compact space $X$ we have $\,\ell_{\omega_1}(A) = \overline{A}^\delta$.
\end{corollary}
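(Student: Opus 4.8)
The plan is to derive the corollary directly from Theorem~\ref{tm:psi=w} by verifying that every Corson-compact space $X$ satisfies the three hypotheses of that theorem: regularity, $F(X) = \omega$, and $\psi(\overline{D}) \le \omega$ for every $D \in \dis_\omega(X)$. Recall that $X$ being Corson-compact means that, for some cardinal $\kappa$, $X$ embeds as a (necessarily closed) subspace of the $\Sigma$-product
\[
\Sigma(\kappa) = \{x \in [0,1]^\kappa : |\operatorname{supp}(x)| \le \omega\},
\]
where $\operatorname{supp}(x) = \{\alpha < \kappa : x(\alpha) \ne 0\}$; I will identify $X$ with such a subspace throughout, so that closures taken in $[0,1]^\kappa$ land inside $X$.

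Regularity is immediate, since $X$ is compact Hausdorff, hence normal. For the pseudocharacter condition, the key observation is a support count: given any countable $D \subs X$, the set $S = \bigcup_{d \in D} \operatorname{supp}(d)$ is a countable subset of $\kappa$. The set $\{x \in [0,1]^\kappa : \operatorname{supp}(x) \subs S\}$ is closed in the cube and homeomorphic to $[0,1]^S$, hence is compact metrizable with weight at most $\omega$. Since $D$ lies in this set, so does its closure, whence $w(\overline{D}) \le \omega$ and therefore $\psi(\overline{D}) \le w(\overline{D}) \le \omega$. This already covers the hypothesis for all countable $D$, not merely the discrete ones.

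The remaining, and slightly more delicate, point is to show $F(X) = \omega$, i.e. that $X$ has no uncountable free sequence. Here I would use that Corson-compacta are Frechet, and combine this with compactness. Suppose $\langle x_\alpha : \alpha < \omega_1 \rangle$ were free; in particular the $x_\alpha$ are distinct. By compactness the set $\{x_\alpha : \alpha < \omega_1\}$ has a complete accumulation point $p$, so that $p \in \overline{\{x_\beta : \beta \ge \gamma\}}$ for every $\gamma < \omega_1$ (each such tail still has size $\omega_1$). On the other hand, the Frechet property gives a sequence from $\{x_\alpha : \alpha < \omega_1\}$ converging to $p$; its index set is countable, hence bounded by some $\gamma < \omega_1$, so $p \in \overline{\{x_\beta : \beta < \gamma\}}$ as well. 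This contradicts freeness at stage $\gamma$, since then $\overline{\{x_\beta : \beta < \gamma\}} \cap \overline{\{x_\beta : \beta \ge \gamma\}} = \emptyset$. Equivalently, one may simply quote the classical fact that $t(X) = F(X)$ for every compact Hausdorff space together with the Frechetness, hence countable tightness, of Corson-compacta.

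With all three hypotheses verified, Theorem~\ref{tm:psi=w} yields $\ell_{\omega_1}(A) = \overline{A}^\delta$ for every $A \subs X$, completing the argument. The main point to get right is the no-free-sequence clause, where compactness must be used to manufacture the accumulation point $p$; the pseudocharacter clause, by contrast, reduces to the elementary support-counting argument above.
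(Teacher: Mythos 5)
Your proposal is correct and takes essentially the same route as the paper: the paper also derives the corollary directly from Theorem~\ref{tm:psi=w}, noting that $\psi(\overline{D}) \le w(\overline{D}) = \omega$ holds for every countable $D \subs \Sigma(\kappa)$ (the same support-counting observation you make) and that Corson-compacta are Frechet, hence contain no uncountable free sequences. You merely spell out details the paper treats as immediate, such as the complete-accumulation-point argument showing that a compact Frechet space satisfies $F(X)=\omega$, which is a correct elaboration.
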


To facilitate the formulation of our next result, we introduce the notation $CAP(\kappa)$ to denote the class
of all spaces in which every subset of cardinality $\kappa$ has a complete accumulation point.

\begin{theorem}\label{tm:psi=w1}
Assume that $X \in CAP(\omega_1)$ is a countably tight regular space such that $\psi(\overline{S}) \le \omega_1$
for every countable subset $S \subs X$. Then for every $A \subs X$ we have $\,\ell_{\omega_1}(A) = \overline{A}^\delta$.
\end{theorem}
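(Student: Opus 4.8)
Since $\ell_{\omega_1}(A) \subs \overline{A}^\delta$ holds in every space, it suffices to prove the reverse inclusion. So fix $p \in \overline{A}^\delta$; if $p \in A$ we are done by the constant sequence, hence assume $p \in \overline{A}^\delta \setm A$. The plan is to mimic the construction in the proof of Theorem \ref{tm:psi=w}: by recursion on $\beta < \omega_1$ I would choose points $x_\beta \in A$ lying in ever smaller $G_\delta$ neighbourhoods of $p$, and then establish convergence of the resulting $\omega_1$-sequence at the end. The essential new difficulty is that $\psi$ is now only bounded by $\omega_1$ rather than by $\omega$, so a single closed $G_\delta$ set can no longer isolate $p$ inside the closure of the countable set built so far; this forces the separating sets to be spread out over the whole recursion.

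To set this up, for each $\alpha < \omega_1$ write $Y_\alpha = \{x_\beta : \beta < \alpha\}$; this is a countable set, so $\psi(\overline{Y_\alpha}) \le \omega_1$ by hypothesis. Using regularity I would fix open sets $\{O^\alpha_\eta : \eta < \omega_1\}$, each containing $p$, such that $\bigcap_{\eta<\omega_1} \overline{O^\alpha_\eta} \cap \overline{Y_\alpha} \subs \{p\}$ (first take an $\omega_1$-sized pseudobase of $p$ in the subspace $\overline{Y_\alpha}$, then shrink each member to one whose closure sits inside it; if $p \notin \overline{Y_\alpha}$ a single separating open set does the job). Next I would fix a bookkeeping scheme assigning to every pair $(\alpha,\eta)$ a stage $b(\alpha,\eta) > \alpha$ so that the sets $K_\beta = \{(\alpha,\eta) : b(\alpha,\eta) \le \beta\}$ are countable and increasing with union all of $\omega_1 \times \omega_1$. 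At stage $\beta$ I put $H_\beta = \bigcap_{(\alpha,\eta) \in K_\beta} O^\alpha_\eta$, which is a $G_\delta$ set containing $p$ because $K_\beta$ is countable, and I use $p \in \overline{A}^\delta$ to choose $x_\beta \in A \cap H_\beta$. By construction, for every pair $(\alpha,\eta)$ the whole tail $\{x_\beta : \beta \ge b(\alpha,\eta)\}$ is contained in $O^\alpha_\eta$.

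It remains to show $\{x_\beta : \beta < \omega_1\}$ converges to $p$; this is where $X \in CAP(\omega_1)$ and countable tightness replace the role of $F(X) = \omega$ from Theorem \ref{tm:psi=w}. Suppose not: then there is an open $U \ni p$ for which $T = \{\beta : x_\beta \notin U\}$ is cofinal, hence uncountable, and I set $Z = \{x_\beta : \beta \in T\} \subs X \setm U$. If $|Z| = \omega_1$, then by $CAP(\omega_1)$ the set $Z$ has a complete accumulation point $q$, and $q \in \overline{Z} \subs X \setm U$ forces $q \ne p$. By countable tightness $q \in \overline{Z'}$ for some countable $Z' \subs Z$, and choosing $\delta$ with $Z' \subs Y_\delta$ gives $q \in \overline{Y_\delta} \setm \{p\}$; hence $q \notin \overline{O^\delta_\eta}$ for some $\eta$. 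But the tail from $\gamma_0 = b(\delta,\eta)$ on lies in $O^\delta_\eta$, so $q \notin \overline{\{x_\beta : \beta \ge \gamma_0\}}$, whence the open neighbourhood $X \setm \overline{\{x_\beta : \beta \ge \gamma_0\}}$ of $q$ meets $Z$ only in values occurring at indices $< \gamma_0$, a countable set, contradicting that $q$ is a complete accumulation point of $Z$. The degenerate case $|Z| \le \omega$, where some value $v \ne p$ repeats at uncountably many indices, is excluded by the same separation: fixing $\delta$ with $v \in Y_\delta$ and $\eta$ with $v \notin O^\delta_\eta$, the point $v$ can occur only at indices below $b(\delta,\eta)$.

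I expect the bookkeeping step to be the main obstacle to get exactly right: the separating open sets must be distributed so that each $H_\beta$ stays a genuine $G_\delta$ (this is what lets me invoke $p \in \overline{A}^\delta$ to keep choosing points of $A$), while at the same time every $O^\alpha_\eta$ is guaranteed to swallow a tail of the sequence. Once that coherence is arranged, the closed-pseudobase condition $\bigcap_{\eta<\omega_1} \overline{O^\alpha_\eta} \cap \overline{Y_\alpha} \subs \{p\}$ together with countable tightness and $CAP(\omega_1)$ makes the convergence argument go through.
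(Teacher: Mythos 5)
Your proof is correct, and it reaches the result by a genuinely different route: where the paper runs an $\omega_1$-chain $\langle N_\alpha : \alpha < \omega_1\rangle$ of countable elementary submodels, you unwind the same strategy into an explicit transfinite recursion with bookkeeping. The paper applies the hypothesis $\psi(\overline{S}) \le \omega_1$ to the sets $\overline{N_\alpha \cap A}$, which forces countable tightness to be used upfront (to get $p \in \overline{N_\alpha \cap A}$ and $\overline{N \cap A} = \bigcup_\alpha \overline{N_\alpha \cap A}$), and the submodel chain silently does your bookkeeping: each $H_\alpha = \bigcap\{U \in N_\alpha \cap \tau(X) : p \in U\}$ is automatically a countable intersection (hence $G_\delta$), is closed by regularity plus elementarity, and the $\omega_1$-sized pseudobase $\mathcal{U}_\alpha$ lands in $N$ and is absorbed into later $H_\beta$'s without any pairing function. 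You instead apply the $\psi$-hypothesis to the initial segments $Y_\alpha$ of the sequence being built, get closedness by shrinking each pseudobase element individually via regularity, and defer countable tightness to the convergence verification, where it pulls a putative complete accumulation point $q \ne p$ of the escaping set $Z$ into some countable $\overline{Y_\delta}$. The bookkeeping you flag as the main obstacle is in fact unproblematic: fix a bijection $g : \omega_1 \to \omega_1 \times \omega_1$ with first coordinate of $g(\xi)$ at most $\xi$ (e.g.\ G\"odel pairing) and set $b(\alpha,\eta) = g^{-1}(\alpha,\eta) + 1$; then $b(\alpha,\eta) > \alpha$ and each $K_\beta$ is countable, exactly as you require. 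Your version is more elementary (no submodels at all) and makes visible where each hypothesis is consumed; note also that you must, and correctly do, treat the degenerate case of a value repeating uncountably often, which the paper's argument handles uniformly because any $v \ne p$ escapes the single closed set $H_\beta \cap \overline{N \cap A}$ for some $\beta$.
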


\begin{proof}
Consider any point $p \in \overline{A}^\delta \setm A$ and then choose an $\omega_1$-chain $\<N_{\alpha}:{\alpha}<{\omega}_1\>$ of countable elementary submodels
of an appropriate $H(\lambda)$  such that\\
\smallskip
(i) $\{X, A, p\} \subs N_0$;\\
\smallskip
(ii) for every $\beta < \omega_1$ we have $\<N_{\alpha}:{\alpha}<{\beta}\>\in N_{\beta}$.
\smallskip
Let $N=\bigcup_{{\alpha}<{\omega}_1}N_{\alpha}$.

Since $X$ has countable tightness, for every $\alpha < \omega_1$ we have $p \in \overline{N_{\alpha} \cap A}$,
hence  $\psi(p,\overline{N_{\alpha}\cap A})\le {\omega}_1$. It follows
that there is a family $\mc U_{\alpha} \in N_{{\alpha}+1}$ of open sets with $|\mc U_{\alpha}| \le \omega_1$ such that
 \begin{displaymath}
  \{p\}=\bigcap \mc U_{\alpha}\cap \overline{N_{\alpha}\cap A}.
 \end{displaymath}
Note that then $\mc U_{\alpha} \in N_{{\alpha}+1} \subs N$ and $|\mc U_{\alpha}|\le {\omega}_1$ imply $\mc U_{\alpha}\subs N$.
Consequently, for all $\alpha < \omega_1$ we have
\begin{displaymath}
 \{p\}=\bigcap \{U\in N\cap \tau(X): p\in U   \}\cap \overline{N_{\alpha}\cap A},
\end{displaymath}
where $\tau(X)$ denotes the topology of $X$.
Since $X$ has countable tightness, we also have $\overline{N\cap A}=\bigcup_{{\alpha}<{\omega}_1}\overline{N_\alpha \cap A}$,
hence
\begin{displaymath}
 \{p\}=\bigcap \{U\in N\cap \tau(X) : p\in U   \}\cap \overline{N\cap A}.
\end{displaymath}

Let us now put
\begin{displaymath}
H_{\alpha}=\bigcap \{U\in N_{\alpha}\cap \tau(X) : p\in U   \}.
\end{displaymath}
Then $H_\alpha$ is a $G_\delta$ set that we claim is closed in $X$.
Indeed, this is because for every $U\in N_\alpha\cap \tau(X)$ with $p\in U$ there is, by the regularity of $X$ and by elementarity,
some $V\in N_\alpha\cap \tau(X)$ with $p\in V$ such that $\overline{V} \subs U$.

Since $N\cap \tau(X) = \bigcup_{\alpha < \omega_1} N_\alpha\cap \tau(X)$, it follows that
\begin{displaymath}
 \{p\}=\bigcap_{\alpha < \omega_1}H_\alpha \cap \overline{N\cap A}.
\end{displaymath}

Now $p \in \overline{A}^\delta$ and
$p \in H_{\alpha} \in N$ imply that for every $\alpha < \omega_1$ we can pick a point
\begin{displaymath}
 x_{\alpha}\in N\cap A\cap H_{\alpha}.
\end{displaymath}

We claim that the sequence $S = \{x_{\alpha}:{\alpha}<{\omega}_1\} \subs A$  converges to $p$.
Indeed, if $q\ne p$ then there is a $\beta < \omega_1$ with $q \notin H_\beta \cap \overline{N\cap A}$.
Then $X \setm H_\beta \cap \overline{N\cap A}$ is a neighborhood of $q$ that misses the final segment
$\{x_{\alpha} : \beta \le {\alpha}<{\omega}_1\}$, hence $q$ is not a complete accumulation point of $S$.
But $X \in CAP(\omega_1)$ then implies that $p$ is the unique complete accumulation point of $S$,
and hence $S$ indeed converges to $p$.

\end{proof}

\section{A large cardinal bound for the tightness of the $G_\delta$-modifications}

The first result of this section shows not only that the answer to the original question of Bella and Spadaro
in the non-compact (or non-Lindelöf) case is negative, in fact it shows that no reasonable bound exists,
at least in ZFC.

\begin{theorem}\label{tm:nob}
Assume that $S$ is a non-reflecting stationary set of $\omega$-limits in an uncountable regular cardinal $\kappa$.
Then there is a 0-dimensional Frechet topology $\tau$ on $\kappa + 1 = \kappa \cup \{\kappa\}$ such that for the space $X = (\kappa + 1, \tau)$
we have $t(X_\delta) = \kappa$.
\end{theorem}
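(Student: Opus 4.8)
The plan is to realize the desired space as a \emph{one-point extension of a ladder space} built from $S$, and to engineer the neighborhoods of the top point $\kappa$ so that the trace on $\kappa$ of the filter of $G_\delta$-neighborhoods of $\kappa$ coincides, up to the club filter, with the closed unbounded filter on $\kappa$. Concretely, I would first fix for each $\alpha\in S$ a strictly increasing ladder $\{\alpha(n):n<\omega\}$ of successor ordinals with $\sup_n\alpha(n)=\alpha$, and, using that $S$ is non-reflecting, fix for each limit $\beta<\kappa$ of uncountable cofinality a club $E_\beta\subseteq\beta$ with $E_\beta\cap S=\emptyset$. On $\kappa$ itself I put the usual locally compact, locally countable, $0$-dimensional, first countable ladder topology (successors and the points of uncountable cofinality isolated, each $\alpha\in S$ the limit of its ladder), which is automatically Fr\'echet; the point $\kappa$ is then added with a neighborhood base defined through the ladders and the clubs $E_\beta$. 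The target is that $A=S$ be a witness: every $G_\delta$-neighborhood of $\kappa$ should meet $S$ (capturing stationarity), while every bounded $B\subseteq\kappa$ should be separated from $\kappa$ by a single $G_\delta$-neighborhood.

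Granting such a neighborhood assignment, the verification proceeds in four steps. First, $0$-dimensionality and Hausdorffness are routine, since away from $\kappa$ the space is the clopen ladder space and the basic neighborhoods of $\kappa$ are arranged to be clopen. Second, and this is where the real work lies, one proves the whole space is Fr\'echet; the only nontrivial point is $\kappa$, where, given any $P$ with $\kappa\in\overline{P}^{\,\tau}$, one must produce a sequence from $P$ converging to $\kappa$, and here the non-reflection of $S$ is used to run an $\omega$-length recursion that climbs toward $\kappa$ without being trapped below a point where $S$ reflects. Third, one computes the $G_\delta$-closure at $\kappa$: because countable intersections of clubs are clubs, a $G_\delta$-neighborhood of $\kappa$ will still ``see'' a club, hence meet the stationary set $S$, giving $\kappa\in\overline{S}^{\,\delta}$; conversely, for $B\subseteq\kappa$ with $|B|<\kappa$ the regularity of $\kappa$ makes $B$ bounded, hence non-stationary, so a club $C$ with $C\cap B=\emptyset$ yields (through the construction) a $G_\delta$-neighborhood of $\kappa$ disjoint from $B$, whence $\kappa\notin\overline{B}^{\,\delta}$. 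Fourth, since the non-$\kappa$ points are isolated in $X_\delta$ and $|X|=\kappa$, one has $t(X_\delta)\le\kappa$; combined with $\kappa\in\overline{S}^{\,\delta}\setminus\bigcup\{\overline{B}^{\,\delta}:B\in[S]^{<\kappa}\}$ this gives $t(X_\delta)=\kappa$.

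I expect the Fr\'echet property to be the main obstacle, because it is in genuine tension with the two features forcing $t(X_\delta)=\kappa$. On the one hand, $\psi(\kappa,X)>\omega$ is mandatory (otherwise $\{\kappa\}$ is a $G_\delta$ and $\kappa$ is isolated in $X_\delta$), and on the other hand the $G_\delta$-trace filter must be cofinal enough to recover stationarity; yet Fr\'echetness forces a \emph{countable, hence bounded}, convergent sequence into every $P$ with $\kappa\in\overline P^{\,\tau}$, and any ``order-like'' or ``co-bounded'' prescription of the neighborhoods of $\kappa$ immediately destroys this (an unbounded $P$ lands in the closure while all its countable subsequences remain bounded and can be pushed off). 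The whole point of the construction is that the non-reflection of $S$ lets one define the neighborhoods through the ladders in such a way that the convergent sequences witnessing Fr\'echetness are genuinely present but \emph{fall out of the $G_\delta$-filter} (their ranges do not $\delta$-accumulate to $\kappa$), so that $\overline{\phantom{A}}^{\,\delta}$ still detects only the cofinal/stationary information carried by $S$. Making these two demands compatible, and in particular proving the Fr\'echet recursion goes through, is the crux.
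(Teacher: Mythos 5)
Your write-up is a plan, not a proof, and the two components on which everything rests are exactly the ones left undone. First, the neighborhood base at $\kappa$ --- the actual mathematical object of the construction --- is never defined: you say the point $\kappa$ is ``added with a neighborhood base defined through the ladders and the clubs $E_\beta$'' and then proceed ``granting such a neighborhood assignment.'' Second, the two nontrivial verifications are deferred: Fr\'echetness at $\kappa$ is explicitly flagged as ``the crux'' without an argument, and the claim that a club $C$ disjoint from a bounded $B$ ``yields (through the construction) a $G_\delta$-neighborhood of $\kappa$ disjoint from $B$'' is pure hand-waving --- this is precisely the hard step. In the paper it amounts to showing that every initial segment $\eta<\kappa$ is $F_\sigma$ in $X$, and that is proved by a transfinite induction on $\eta$ in which, at stages of uncountable cofinality, one fixes a club $C\subseteq\eta$ disjoint from $S$ and glues the $F_\sigma$-decompositions of the intervals $[\gamma,\gamma^+)$, $\gamma\in C$. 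So non-reflection is consumed by the \emph{lower} half of the computation (pushing every set of size $<\kappa$ off $\kappa$ by a single $G_\delta$), which your sketch treats as automatic.

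You have also misassigned the roles of the hypotheses, and your heuristic obstruction is false, which together steer you toward unnecessary machinery. The paper needs no ladder topology on $\kappa$ at all: every point below $\kappa$ is isolated, and a basic neighborhood of $\kappa$ is $V\cup\{\kappa\}$ where $V\subseteq\kappa$ contains, for each $\alpha\in S$, an interval $(\beta,\alpha)$ with $\beta<\alpha$. Fr\'echetness then follows from \emph{stationarity alone}, with no recursion: if $\kappa\in\overline{A}$ then $A$ must be cofinal in some $\alpha\in S$ (otherwise $\kappa\setminus A$ is a neighborhood trace), and any increasing $\omega$-sequence in $A\cap\alpha$ with supremum $\alpha$ converges to $\kappa$, since every basic neighborhood contains a tail of $\alpha$. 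This also refutes your claim that any ``co-bounded prescription of the neighborhoods of $\kappa$ immediately destroys'' Fr\'echetness: by Fodor's theorem every such $V$ \emph{does} contain a final segment of $\kappa$ (which, with $\mathrm{cf}(\kappa)>\omega$, is what makes every $G_\delta$ around $\kappa$ co-bounded and gives $t(X_\delta)=\kappa$ with witness $\kappa$ itself, rather than $S$), and yet the convergent sequences witnessing Fr\'echetness are the \emph{bounded} ones climbing to points of $S$ --- the tension you predict between co-boundedness and Fr\'echetness simply does not arise once the neighborhoods are prescribed locally at each $\alpha\in S$ instead of order-theoretically. As it stands, your proposal identifies the right target statements but proves neither of the two that carry the content.
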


\begin{proof}
Let us denote by $\mathcal{V}$ the family of all subsets $V$ of $\kappa$ having the property that for every
$\alpha \in S$ there is $\beta < \alpha$ with $(\beta, \alpha) = \alpha \setm \beta \subs V$. We define $\tau$
to be the topology on $\kappa + 1$ for which all points in $\kappa$ are isolated and $\{V \cup \{\kappa\} : V \in  \mathcal{V}\}$
forms a neighborhood base for the point $\kappa$. More precisely,
$\tau = \mathcal{P}(\kappa) \cup \{V \cup \{\kappa\} : V \in  \mathcal{V}\}$.
It is simple to verify that $\tau$ is indeed a 0-dimensional $T_2$ topology.

To see that $\tau$ is Frechet, observe that if $A \subs \kappa$ accumulates to the point $\kappa$ then there is
an $\alpha \in S$ such that $\sup (A \cap \alpha) = \alpha$. But then there is an increasing $\omega$-sequence
$B = \{\beta_n : n < \omega\} \subs A \cap \alpha$  with $\sup B = \alpha$ and clearly $B$ converges to the point $\kappa$.

Since $S$ is stationary, it is an immediate consequence of Fodor's theorem that every set $V \in  \mathcal{V}$
includes a final segment of $\kappa$. By $cf(\kappa) > \omega$ it follows then that every $G_\delta$ set
containing the point $\kappa$ also includes a final segment of $\kappa$, hence the set $\kappa$ accumulates to
the point $\kappa$ in the space $X_\delta$. Thus, since $\kappa$ is regular, to prove $t(X_\delta) = \kappa$ it will suffice to show that
no proper initial segment of $\kappa$ accumulates to the point $\kappa$ in the space $X_\delta$.

This, of course, is equivalent to showing that for each $\eta < \kappa$ the initial segment $\eta$ is an
$F_\sigma$ set in $X$. We do this by transfinite induction on $\eta < \kappa$. Thus assume that $\eta < \kappa$
and we know that for every $\zeta < \eta$ the initial segment $\zeta$ is an
$F_\sigma$ set $X$, i.e. $\zeta = \cup_{n < \omega} F_{\zeta,n}$ with each $F_{\zeta,n}$ closed.
Of course, if $cf(\eta) \le \omega$ then it is trivial that $\eta$ is also an $F_\sigma$ set.

So, assume that  $cf(\eta) > \omega$ and, using that $S$ is non-reflecting, fix a closed unbounded subset
$C$ in $\eta$ with $0 \in C$ that is disjoint from $S$. For every $\gamma \in C$ let $\gamma^+$ denote the least
member of $C$ above $\gamma$.

Let us now define for each $n < \omega$ the set $F_{\eta,n}$ as follows:
$$F_{\eta,n} =  \bigcup \{F_{\gamma^+,n} \cap [\gamma, \gamma^+) : \gamma \in C\}\,.$$
Then it is obvious that every $F_{\eta,n}$ is closed in $X$,
i.e. does not contain the point $\kappa$ in its closure, and the union of the $F_{\eta,n}$'s equals $\eta$,
hence it is indeed an $F_\sigma$.

\end{proof}

Consistently, Theorem \ref{tm:nob} yields a very strong negative answer to the question of Bella and Spadaro 
in the non-compact case. Indeed, for example, in the constructible universe $L$, or in fact in any
set generic extension of $L$, there is a proper class of regular cardinals that
contain non-reflecting stationary sets of $\omega$-limits.
On the other hand, we do not know the answer to the following natural question:

\medskip

PROBLEM 2.
Is there a ZFC example of a countably tight Hausdorff (or regular, or Tychonov) space $X$ for which $t(X_\delta) > 2^\omega$?

\medskip

It is known that if there is a non-reflecting stationary set of $\omega$-limits in a regular cardinal $\kappa$,
then $\kappa$ is less than the first strongly compact cardinal $\lambda$, provided that it exists.
On the other hand, modulo some large cardinals, it is also known that there may be ZFC models in which such a $\lambda$ exists and
the cardinals $\kappa$ admitting a non-reflecting stationary set of $\omega$-limits are cofinal in $\lambda$, see e.g. \cite{BM}.
Consequently, our second result implies that the first one is in some sense sharp.

In this we shall use the following characterization of strongly compact cardinals, see \cite{K}:
The cardinal $\lambda$ is strongly compact iff for every set $A$ having cardinality at least $\lambda$
there is a $\lambda$-complete and fine free ultrafilter $\mathcal{U}$ on the set $[A]^{< \lambda}$.
That $\mathcal{U}$ is fine
means that for every element $a \in A$ we have $$\{B \in [A]^{< \lambda} : a \in B\} \in \mathcal{U}.$$

\begin{theorem}\label{tm:scp}
Let $\lambda$ be a strongly compact cardinal. Then for every topological space $X$ satisfying
$t(p,X) < \lambda$ for every $p \in X$ we have $t(X_\delta) \le \lambda$.
\end{theorem}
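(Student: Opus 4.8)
The plan is to fix a point $p$ and a set $A \subs X$ with $p \in \overline{A}^\delta$ and to produce a subset $A' \subs A$ with $|A'| < \lambda$ (hence $\le \lambda$) such that $p \in \overline{A'}^\delta$; taking the supremum over all such $p$ and $A$ then yields $t(X_\delta) \le \lambda$. If $|A| < \lambda$ one may take $A' = A$, so assume $|A| \ge \lambda$ and use the quoted characterization of strong compactness to fix a $\lambda$-complete, fine and free ultrafilter $\mc U$ on $[A]^{<\lambda}$. I would then argue by contradiction: suppose that $p \notin \overline{B}^\delta$ for every $B \in [A]^{<\lambda}$. For each such $B$ this yields a $G_\delta$ set $H_B = \bigcap_{n<\omega} V^B_n$ with each $V^B_n$ open, $p \in V^B_n$, and $H_B \cap B = \emptyset$.

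Next I would stabilize the indices using the countable completeness of $\mc U$, which follows from $\lambda$-completeness since $\lambda > \omega$. Fix $a \in A$. By fineness the set $\{B : a \in B\}$ belongs to $\mc U$, and for every such $B$ the disjointness $H_B \cap B = \emptyset$ forces some $n$ with $a \notin V^B_n$. Sending each such $B$ to the least such $n$ and applying countable completeness produces a single integer $n_a < \omega$ with $Y_a := \{B \in [A]^{<\lambda} : a \notin V^B_{n_a}\} \in \mc U$. This lets me split $A = \bigcup_{n<\omega} A_n$ into countably many pieces, where $A_n = \{a \in A : n_a = n\}$.

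The decisive step is to show that $p \notin \overline{A_n}$ (closure in $X$) for every $n$. Assuming otherwise, the hypothesis $t(p,X) < \lambda$ supplies a set $A_n^0 \in [A_n]^{<\lambda}$ with $p \in \overline{A_n^0}$; then $\lambda$-completeness gives a single $B^* \in \bigcap_{a \in A_n^0} Y_a$, so that $A_n^0 \subs X \setm V^{B^*}_n$. As the latter set is closed, $p \in \overline{A_n^0}$ would place $p$ in $X \setm V^{B^*}_n$, contradicting $p \in V^{B^*}_n$. With $p \notin \overline{A_n}$ in hand, each $X \setm \overline{A_n}$ is a genuine open neighbourhood of $p$, so $\bigcap_{n<\omega} (X \setm \overline{A_n})$ is a $G_\delta$ set containing $p$ and disjoint from $\bigcup_n A_n = A$, contradicting $p \in \overline{A}^\delta$. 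I expect this last step to be the main obstacle: the naive \emph{ultralimit} sets $\{x : \{B : x \in V^B_n\} \in \mc U\}$ that the ultrafilter produces need not be open, and it is precisely the partition into the pieces $A_n$ together with the tightness hypothesis that converts the ultrafilter data into honest open sets, and hence into a genuine $G_\delta$.
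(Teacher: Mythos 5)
Your proposal is correct and follows essentially the same route as the paper's proof: your assignment $a \mapsto n_a$ via countable completeness and fineness is exactly the paper's ultrafilter-defined function $f$, the partition $A=\bigcup_{n<\omega}A_n$ is the paper's $A_n=f^{-1}(n)$, the use of $t(p,X)<\lambda$ together with $\lambda$-completeness to get $p\notin\overline{A_n}$ matches the paper's argument with $J\in[A_n]^{\mu}$, and the concluding $G_\delta$ set $\bigcap_{n<\omega}\bigl(X\setminus\overline{A_n}\bigr)$ is the paper's (implicit) final step. The only cosmetic difference is that your sets $Y_a=\{B : a\notin V^{B}_{n_a}\}$ make sense for all $B$, letting you skip the paper's extra appeal to fineness to arrange $J\subseteq B$ before stabilizing $f_B$ on $J$.
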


\begin{proof}
Assume that $A \subs X$ and $p \in X$ is a point such that for every $B \in [A]^{< \lambda}$
we have $p \notin \overline{B}^\delta$. We claim that then $p \notin \overline{A}^\delta$ as well.
This clearly implies $t(p,X_\delta) \le \lambda$, hence as $p \in X$ was arbitrary,
$t(X_\delta) \le \lambda$.

To prove our claim, let us fix for each set $B \in [A]^{< \lambda}$ a countable collection
$\{V_{B,n} : n < \omega\}$ of open neighborhoods of $p$ such that  $\bigcap \{V_{B,n} : n < \omega\} \cap B = \emptyset$.
This allows us to define the function $f_B : A \to \omega$ with the stipulation
$$f_B(x) = \min \{n : x \notin V_{B,n}\}.$$

Since $A$ clearly can be assumed to have cardinality $\ge \lambda$,
we may next fix a $\lambda$-complete and fine free ultrafilter on $\mathcal{U}$. Then we may define the function $f : A \to \omega$ with the stipulation
$$f(x) = n \,\Leftrightarrow \, \{B \in [A]^{< \lambda} : x \in B \mbox{ and } f_B(x) = n\} \in \mathcal{U}.$$
This makes sense because $\mathcal{U}$ is $\lambda$-complete and for every $x \in A$ we have
$$\{B \in [A]^{< \lambda} : x \in B\} \in \mathcal{U}.$$

Next we show that for every $n < \omega$ the closure in $X$ of the set $A_n = f^{-1}(n)$ misses
the point $p$. This clearly will imply that $p \notin \overline{A}^\delta$.

Now let $\mu = t(p,X) < \lambda$ and hence it will suffice to show that for every
$J \in [A_n]^\mu$ we have $p \notin \overline{J}$. As $\mathcal{U}$ is fine and $\lambda$-complete,
we have $$U = \{B \in [A]^{< \lambda} : J \subs B\} \in \mathcal{U}.$$
Moreover, this clearly implies that we also have
$$W = \{B \in U : \forall\, x \in J \, \big(f(x) = f_B(x)\big)\} \in \mathcal{U}.$$
But for any $B \in W$ we then have $J \cap V_{B,n} = \emptyset$, hence $p \notin \overline{J}$.
This completes our proof.

\end{proof}

{\bf Acknowledgements}. In the research on and preparation of this paper the second, third and fourth named authors
were supported by NKFIH grant no. K113047. The second author would also like to thank the support from the
Mathematics Department of UNC Charlotte and in particular Professor Alan Dow.

\newpage

\end{document}